\theoremstyle{plain}      
\newtheorem{thm}{Theorem}
\newtheorem{lem}[thm]{Lemma}
\renewcommand\subsection{\@startsection{subsection}{2}%
  \z@{.5\linespacing\@plus.7\linespacing}{-.5em}%
  {\normalfont\scshape}}
\begin{document}
\title[Patterns of conjunctive forks]{\large Patterns of conjunctive forks\\[0.5ex]~~}
\author{Va\v{s}ek Chv\'{a}tal, Franti\v sek Mat\' u\v s and Yori Zw\'ol\v{s}}
\address{Department of Computer Science and Software Engineering\\
         Concordia University, Montreal, Quebec H3G 1M8\\ Canada}
         \email[V.\ Chv\'{a}tal]{chvatal@cse.concordia.ca}
\thanks{Research of V.\ Chv\'{a}tal and Y.\ Zw\'ol\v{s} was supported by the Canada Research
        Chairs program and by the Natural Sciences and
        Engineering Research Council of Canada}
\address{Institute of Information Theory and Automation\\
         Academy of Sciences of the Czech Republic\\
         Pod vod\' arenskou v\v e\v z\'{\i} 4, 182 08 Prague~8\\
         Czech Republic}
         \email[F.\  Mat\' u\v s]{matus@utia.cas.cz}
\thanks{Research of F.\  Mat\' u\v s was supported by
          Grant Agency of the Czech Republic under Grant 13-20012S}
\address{Google DeepMind, London, United Kingdom}
         \email[Y.\ Zw\'ol\v{s}]{yori@google.com}
\keywords{Conjunctive fork, conditional independence, covariance, correlation,
                     binary random variables, causal betweenness, causality.}
\subjclass[2010]{Primary   62H20,  
                 secondary 62H05. 
                      }
\begin{abstract}
    Three events in a probability space form a conjunctive fork if
    they satisfy specific constraints on conditional independence
    and covariances. Patterns of conjunctive forks within collections
    of events are characterized by means of
    systems of linear equations that have positive solutions. This characterization allows 
		patterns of conjunctive forks to be recognized in polynomial time.
		Relations
    to previous work on causal betweenness and
    on patterns of conditional independence among random variables
    are discussed.
\end{abstract}

\maketitle

%%%%%%%%  COMMANDS   %%%%%%%%%%%%%%%%%%
\newcommand{\pmn}{\emptyset}
\newcommand{\pdm}{\subseteq}
\newcommand{\sm}{\setminus}
    \renewcommand{\ge}{\geqslant}    
		\renewcommand{\le}{\leqslant}
    \renewcommand{\geq}{\geqslant}   
		\renewcommand{\leq}{\leqslant}
    \def\Z{\mathbb{Z}}
\renewcommand{\Pr}{\ensuremath{P}\xspace}
\newcommand{\Qr}{\ensuremath{Q}\xspace}
\newcommand{\Exp}{\ensuremath{\mathrm{E}}\xspace}
    \newcommand{\egy}{\ensuremath{\mathds{1}}}
    \newcommand{\ci}{\ensuremath{\!\perp\!\!\!\!\perp\!}}
\newcommand{\qqq}{\ensuremath{{\mathfrak q}}\xspace}
\newcommand{\rrr}{\ensuremath{\mathfrak r}\xspace}
\newcommand{\bbb}{\ensuremath{\mathfrak b}\xspace}
\newcommand{\sss}{\ensuremath{\mathfrak s}\xspace}
\newcommand{\OO}{\ensuremath{\mathit\Omega}\xspace}
\newcommand{\zlo}[2]{\ensuremath{\mbox{\large$\frac{#1}{#2}$}}}
\newcommand{\abs}[1]{\lvert#1\rvert}

\newcommand{\peq}{\mathrel{\smash{\overset{\scriptscriptstyle\Pr}{=}}}}
\newcommand{\eee}{\ensuremath{\mathrel{\smash{\overset{\scriptscriptstyle\rrr}{\thicksim}}}}\xspace}
\newcommand{\eeq}{\ensuremath{\mathrel{\smash{\overset{\scriptscriptstyle\qqq}{\thicksim}}}}\xspace}
\newcommand{\oA}{\overline{A}}
\newcommand{\oB}{\overline{B}}
\newcommand{\oC}{\overline{C}}
\newcommand{\oD}{\overline{D}}
\newcommand{\oE}{\overline{E}}
\newcommand{\oF}{\overline{F}}
\newcommand{\vare}{\ensuremath{\varepsilon}\xspace}
\newcommand{\corr}{\ensuremath{\mathrm{corr}}\xspace}
\newcommand{\cvr}[2]{\ensuremath{\mathrm{cov}({#1},{#2})}\xspace}
%111111111111111111111111111111111111111111111111111111111111111111111111111
\section{Motivation}

 Hans Reichenbach \cite[Chapter 19]{Rei56} defined a \emph{conjunctive fork}
 as an ordered triple $(A,B,C)$ of events $A$, $B$ and $C$ in a probability
 space $(\OO,\mathcal F,\Pr)$ that~satisfies
 \begin{align}
    \Pr(AC|B) &= \Pr(A|B) \Pr(C|B)\,,\label{reich1}\\
    \Pr(AC|\oB) &= \Pr(A|\oB) \Pr(C|\oB)\,,\label{reich2}\\
    \Pr(A|B) &> \Pr(A|\oB)\,,\label{reich3}\\
    \Pr(C|B) &>  \Pr(C|\oB)\,\label{reich4}
 \end{align}
 where, as usual, $AC$ is a shorthand for $A\cap C$ and $\oB$ denotes the
 complementary event $\OO\sm B$. (Readers comparing this definition with Reichenbach's
    original beware: his notation is modified here by switching the role of
    $B$ and $C$. To denote the middle event in the fork, he used $C$, perhaps
    as mnemonic for `common cause'.)
Implicit in this definition is the asumption
\begin{equation}
0<\Pr(B)<1\, ,\label{reich5}
\end{equation}
which is needed to define the conditional probabilities in~\eqref{reich1}--\eqref{reich4}.

A similar notion
 was introduced earlier in the context of sociology \cite[Part~I,
 Section 2]{KenLaz50}, but the context of Reichenbach's discourse was philosophy of science: conjunctive
 forks play a central role in his causal theory of time. In this
 role, they have attracted considerable attention: over one hundred
 publications, such as \cite{Bre77,Sal80,Sal84,EllEri86,CarJon91,
 Dow92,Spo94,HofRedSza99,Kor99}, refer to them.   Yet for all this interest, no one seems to have asked  
 a fundamental question: 
\begin{center}
	\fbox{\emph{What do ternary relations defined by conjunctive forks look like?}}
\end{center}
The purpose of our paper is to answer this question.

\medskip

An additional stimulus to our work was a previous answer~\cite{ChvWu12} to a similar question,
\begin{center}
\fbox{\emph{What do ternary relations defined by causal betweenness look like?}}
\end{center}
Here, \emph{causal betweenness} is another ternary relation on sets of events in probability spaces, also introduced by Reichenbach\cite[p.~190]{Rei56} in the context of his causal theory of time. 
(This relation is reviewed in Section~\ref{S:btw}.) From this perspective, our paper may be seen as a companion to~\cite{ChvWu12}.

%222222222222222222222222222222222222222222222222222222222222222222222222222222
\section{The main result}

Let us write $(A,B,C)_{\Pr}$ to signify that
 $(A,B,C)$ is a conjunctive fork in a probability
 space $(\OO,\mathcal F,\Pr)$ and let us say that
 events  $A_i$ in this space, indexed by elements $i$ of a set $N$, 
\emph{fork-represent} a ternary relation $\rrr$ on $N$ if and only if
 \[
    \rrr=\{(i,j,k)\in N^3\colon (A_i,A_j,A_k)_{\Pr}\,\}\,.
 \]
In order to characterize ternary relations on a finite ground set that are fork representable, we need a few definitions.

To begin, call a ternary relation \rrr
 on a ground set~$N$ a \emph{forkness} if and only if it satisfies 
\begin{align}
    (i,j,i)\in\rrr\;\;&\Rightarrow\;\;(j,i,j)\in\rrr\label{sym}\\
    (i,j,i), (j,k,j)\in\rrr
                        \;\;&\Rightarrow\;\;(i,k,i)\in\rrr\label{E:trans}\\
    (i,k,j)\in\rrr\;\;&\Rightarrow\;\;(j,k,i)\in\rrr\label{flip}\\
		(i,j,k)\in\rrr\;\;&\Rightarrow\;\;(i,j,j), (j,k,k),(k,i,i)\in \rrr \label{lower}\\    
		 (i,k,j), (i,j,k)\in\rrr\;\;&\Rightarrow\;\; (j,k,j)\in\rrr \label{btw}
 \end{align}
 for all choices $i,j,k$ in $N$. 

Given a forkness $\rrr$, write
\[
V_\rrr =\{i\in N\colon(i,i,i)\in\rrr\}
\]
and let \eee denote the binary relation defined on $V_\rrr$ by 
 \[
    i\eee j\;\;\Leftrightarrow\;\;(i,j,i)\in\rrr\,.
 \] 
 This binary relation is reflexive by definition of $V_\rrr$, it
 is symmetric by~\eqref{sym}, and it is transitive by~\eqref{E:trans}. In short, \eee is
 an equivalence relation. Call a forkness \rrr \emph{regular} if, and only if,
 \begin{equation}\label{E:regular}
    (i,j,k)\in\rrr, \; i\eee i', \; j\eee j', \; k\eee k'
            \;\Rightarrow\; (i',j',k')\in \rrr\,.
 \end{equation}
The \emph{quotient} of a a regular forkness \rrr is the the ternary relation whose ground set is the set of equivalence classes of \eee 
and which 
consists of all triples $(I,J,K)$ such that $(i,j,k)\in\rrr$
 for at least one $i$ in $I$, at least one $j$ in $J$, and at least one $k$ in $K$. (Equivalently, since \rrr is regular, $(I,J,K)$ belongs to its quotient if and only if 
$(i,j,k)\in\rrr$ for all $i$ in $I$, for all $j$ in $J$,~and~for all~$k$~in~$K$.)  

 Call a ternary relation \qqq \emph{solvable} if and only if the linear system 
 \begin{multline}\label{system}
     x_{\{I,K\}}\!=\!x_{\{I,J\}}+x_{\{J,K\}}\\ 
        \text{ for all $(I,J,K)$ in $\qqq$ with pairwise distinct $I,J,K$}
 \end{multline}
 has a solution with each $x_{\{I,J\}}$ positive.

\begin{thm}\label{T:ChMaZw}
    A ternary relation on a finite ground set is fork representable if and only
    if it is a regular forkness and its quotient is solvable.
\end{thm}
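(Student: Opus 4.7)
My plan is to prove both directions of the equivalence. For \textbf{necessity}, assume events $A_i$ fork-represent $\rrr$, and observe first that $(A,B,A)$ being a conjunctive fork forces $\Pr(A|B)^2=\Pr(A|B)$ via \eqref{reich1} and $\Pr(A|\oB)^2=\Pr(A|\oB)$ via \eqref{reich2}, so \eqref{reich3} collapses these to $\Pr(A|B)=1$ and $\Pr(A|\oB)=0$, i.e.\ $A\peq B$. Thus $(i,j,i)\in\rrr$ is exactly $A_i\peq A_j$, making axioms (sym), (E:trans), and (flip) immediate---the first two from the equivalence-relation properties of $\peq$, and (flip) from the symmetry of \eqref{reich1}--\eqref{reich4} in their first and third arguments. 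Axiom (lower) encodes the well-known fact that \eqref{reich1}--\eqref{reich4} force each of the three pairs of events in a fork to be positively correlated. For axiom (btw), the key tool is the correlation product identity
\[
\corr(A,C)\;=\;\corr(A,B)\cdot\corr(B,C)
\]
valid for every fork $(A,B,C)$ with pairwise $\peq$-distinct components, derivable by computing $\cvr{A}{C}=\Exp[\Exp[\egy_A\mid\egy_B]\Exp[\egy_C\mid\egy_B]]-\Exp[\egy_A]\Exp[\egy_C]$ via the two screening-off identities and dividing by standard deviations. Applying it to both $(A_i,A_k,A_j)$ and $(A_i,A_j,A_k)$ yields $\corr(A_j,A_k)^2=1$, whence $A_j\peq A_k$. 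Regularity of $\rrr$ is immediate. For solvability, I set $x_{\{I,J\}}=-\log\corr(A_i,A_j)$ for arbitrary representatives $i\in I$, $j\in J$: positivity follows because $\corr\in(0,1)$ for $\peq$-distinct positively-correlated binary events, and the product identity is exactly~\eqref{system}.

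For \textbf{sufficiency}, I would first use regularity to reduce to representing the quotient $\qqq$, since a representation of $\qqq$ lifts to one of $\rrr$ by copying one event through each $\eee$-class; the axioms force $\rrr\pdm V_\rrr^3$, so indices outside $V_\rrr$ are handled by constant events. Fix a positive solution $x$ of \eqref{system} and write $\rho_{IJ}=e^{-x_{\{I,J\}}}\in(0,1)$. The task is to construct binary events $A_I$, one per class, on a common probability space satisfying (a) $\corr(A_I,A_J)=\rho_{IJ}$ for every distinct pair and (b) both screening-off identities \eqref{reich1}--\eqref{reich2} for $(A_I,A_J,A_K)$ hold if and only if $(I,J,K)\in\qqq$. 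A natural elementary building block is three classes with $(I,J,K)\in\qqq$, realized by a latent-common-cause model in which $A_J$ is a fair coin and $A_I,A_K$ are obtained from $A_J$ via independent binary-symmetric channels with crossover probabilities chosen to give the correlations $\rho_{IJ}$ and $\rho_{JK}$; the third correlation $\corr(A_I,A_K)$ then equals $\rho_{IJ}\rho_{JK}=\rho_{IK}$ automatically by the product identity. The plan is to amalgamate such three-variable pieces into one joint distribution whose parameters are derived from~$x$.

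The \textbf{main obstacle} is the negative half of~(b): pairwise marginals leave very many degrees of freedom in the joint distribution, and these must be selected so that conditional independence \emph{fails} for every distinct triple outside $\qqq$ while \emph{holding} for every one inside. Solvability of \eqref{system} does not force the metric $x$ to be tree-like---cycle-like configurations in $\qqq$ are permitted---so no purely local amalgamation of three-variable pieces can suffice in general. I expect the proof to proceed by an explicit global construction parametrized by $x$ (perhaps modelled on the companion causal-betweenness result of~\cite{ChvWu12}), followed by a case analysis checking on the nose both the required screenings and the absence of the forbidden ones; verifying the latter against the five forkness axioms is where the bulk of the technical work should lie.
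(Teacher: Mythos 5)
Your necessity argument is essentially the paper's: the characterization of $(A,B,A)$ being a fork as $A\peq B$ (plus nontriviality), the correlation product identity $\corr(A,C)=\corr(A,B)\corr(B,C)$ for forks, and the assignment $x_{\{I,J\}}=-\log\corr(A_i,A_j)$ on representatives of $\eee$-classes are exactly the devices used in Lemmas~\ref{V1} and~\ref{V2}, and your derivations of the axioms (including squaring the product identity to get \eqref{btw}) are sound. The reduction of sufficiency to the quotient, with constant events for indices outside $V_\rrr$, is also fine.

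The sufficiency direction, however, has a genuine gap: you correctly identify the hard part (forcing conditional independence to \emph{fail} on every pairwise-distinct triple outside $\qqq$ while holding on every triple inside, with no tree structure to lean on), but you stop at announcing that ``an explicit global construction parametrized by $x$'' should exist rather than producing one, and you yourself argue that your proposed amalgamation of three-variable common-cause blocks cannot work. The paper's construction, which is the substantive content of this direction, is the following. Take $\OO=2^{C}$, $A_I=\{\omega\colon I\in\omega\}$, and define
\[
\Pr(\omega)=2^{-n}\Bigl[1+\sum\nolimits_{\{I,J\}\in E_\qqq}\chi_{\{I,J\}}(\omega)\,\gamma^{x\{I,J\}}+\vare\sum\nolimits_{\{I,J,K\}\in M_\qqq}\chi_{\{I,J,K\}}(\omega)\Bigr],
\]
where $\chi_L(\omega)=(-1)^{|\omega\cap L|}$, $E_\qqq$ is the set of pairs occurring together in some triple of $\qqq$, and $M_\qqq$ is the set of three-element sets all of whose pairs lie in $E_\qqq$ but which form no triple of $\qqq$; here $\gamma,\vare>0$ are small enough to keep $\Pr$ positive. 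The degree-two Fourier coefficients make $\cvr{A_I}{A_J}=\tfrac14\gamma^{x\{I,J\}}$ for $\{I,J\}\in E_\qqq$ and $0$ otherwise, so that correlations multiply along a triple exactly when the $x$'s add; a computation then shows that for a pairwise-distinct triple with all three pairs in $E_\qqq$ and no degree-three perturbation, $(A_I,A_J,A_K)_\Pr$ is equivalent to $x_{\{I,J\}}+x_{\{J,K\}}=x_{\{I,K\}}$, which the positivity of the solution and axiom \eqref{flip} convert into membership of $(I,J,K)$ in $\qqq$. The $\vare$-term on the sets in $M_\qqq$ is precisely the device that destroys conditional independence on the ``all pairs connected but not in $\qqq$'' triples that your local amalgamation cannot handle. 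Without this (or an equivalent) explicit measure, the `if' direction remains unproved.
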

Theorem~\ref{T:ChMaZw} implies that fork-representability of a ternary
 relation~$\rrr$ on a finite ground set $N$ can be tested in time polynomial
 in~$\abs{N}$. More precisely, polynomial time suffices to test 
 \rrr for being a forkness, for testing its regularity, and for the construction of its quotient \qqq. Solvability of \qqq means solvability of a system
 of linear equations and linear inequalities, which can be tested in polynomial
 time by the breakthrough result of~\cite{Kha79}.

\medskip

We prove the easier `only if' part of Theorem~\ref{T:ChMaZw} in in Section~\ref{S:onlyif} and we prove the `if' part in Section~\ref{S:mainproof}.
In Section~\ref{S:btw}, we comment on causal betweenness and its relationship to causal forks.  
In the final Section~\ref{S:disc},  we discuss connections to previous work on 
 patterns of conditional independence.

%333333333333333333333333333333333333333333333333333333333333333333333333333333
\section{Proof of the `only if' part}\label{S:onlyif}

\subsection{Reichenbach's definition restated}  
Reichenbach's definition of a conjunctive fork has a neat paraphrase in terms of random variables. To present it, let us first review a few standard definitions. 

The \emph{indicator function\/} $\egy_E$ of an event $E$ in a probability space is the random variable defined by $\egy_E(\omega)=1$ if $\omega\in E$
 and $\egy_E(\omega)=0$ if $\omega\in \oE$. Indicator functions $\egy_A$ and $\egy_C$ are said to be \emph{conditionally independent given $\egy_B$,\/} in symbols $\egy_A\ci\egy_C|\egy_B$, if and only if events $A$, $B$, $C$ satisfy~\eqref{reich1} and~\eqref{reich2}. 
The \emph{covariance\/} of $\egy_A$ and $\egy_B$, denoted here as $\cvr{A}{B}$, is defined by 
 $$\cvr{A}{B}=\Pr(AB)-\Pr(A)\Pr(B).$$ 
Since~\eqref{reich3} means that  $\cvr{A}{B}>0$ and~\eqref{reich4} means that $\cvr{B}{C}>0$, we conclude that
\begin{multline}
(A,B,C)_\Pr \Leftrightarrow 
 \egy_A\ci\egy_C|\egy_B \;\&\; \cvr{A}{B}\!\!>\!\!0 \;\&\; \cvr{B}{C}\!\!>\!\!0. \label{fork}
\end{multline}

\subsection{A couple of Reichenbach's results}  

\ Reichenbach~\cite[p.~160, equation~(12)]{Rei56}  noted that
\begin{multline}\label{rr1}
\egy_A\ci\egy_C|\egy_B \;\Rightarrow\;\\
\cvr{A}{C}=\cvr{B}{B}\cdot(\Pr(A|B)-\Pr(A|\oB))\cdot(\Pr(C|B)-\Pr(C|\oB))
\end{multline}
and so~\cite[p.~158, inequality (1)]{Rei56}
\begin{equation}\label{covac}
(A,B,C)_\Pr \Rightarrow \cvr{A}{C}>0. 
\end{equation}
 Implication~\eqref{covac} was his reason for calling the fork
    `conjunctive': it is ``a fork which makes the conjunction of the two
    events more frequent than it would be for independent events''
    \cite[p.~159]{Rei56}.

Since 
\begin{multline*}
\cvr{B}{B}(\Pr(A|B)-\Pr(A|\oB))=\\ \Pr(AB)(1-\Pr(B))-\Pr(A\oB)\Pr(B)=\cvr{A}{B}
\end{multline*}
and, similarly, $\cvr{B}{B}(\Pr(C|B)-\Pr(C|\oB))=\cvr{C}{B}$, 
Reichenbach's implication~\eqref{rr1} can be stated as
\begin{equation}\label{fero}
\egy_A\ci\egy_C|\egy_B \;\Rightarrow\;\cvr{A}{C}\cdot\cvr{B}{B}=\cvr{A}{B}\cdot\cvr{B}{C}.
\end{equation}
\subsection{The idea of the proof}  
An event $E$ is called \emph{\Pr-nontrivial\/} if and only if $0<\Pr(E)<1$, which is equivalent to $\cvr{E}{E}>0$. 
When $E,F$ are \Pr-nontrivial events, the \emph{correlation\/} of their indicator functions, denoted here as $\corr(E,F)$, is defined 
by
\[
\corr(E,F)\;=\; \frac {\cvr{E}{F}}{\cvr{E}{E}^{1/2}\cvr{F}{F}^{1/2}}\, .
\]
In these terms, \eqref{fero} reads
\begin{equation}\label{corident}
\egy_A\ci\egy_C|\egy_B \;\Rightarrow\;\corr(A,C)=\corr(A,B)\cdot \corr(B,C).
\end{equation}

The strict inequalities \eqref{reich3}, \eqref{reich4}, \eqref{reich5} imply that
\begin{multline}\label{nontriv}
\text{in every conjunctive fork $(A,B,C)$,}\\ \text{all three events $A$, $B$, $C$ are \Pr-nontrivial,}
\end{multline}
and so $\corr(A,B)$, $\corr(A,C)$, $\corr(B,C)$ are well defined;
\eqref{fork} guarantees that $\corr(A,B)>0$, $\corr(B, C)>0$ and \eqref{covac} guarantees that $\corr(A,C)>0$. 

Fact~\eqref{corident} guarantees that the system
\[
x_{\{A,C\}}\!=\!x_{\{A,B\}}+x_{\{B,C\}} 
        \text{ for all conjunctive forks $(A,B,C)$}
\]
can be solved by setting $x_{\{E,F\}} = -\ln \corr(E,F)$.
This observation goes a long way toward proving the `only if' part of Theorem~\ref{T:ChMaZw}, but it does not quite get there: 
For instance, if $(A,B,C)$ is a conjunctive fork and $A\peq B\,$, then $\corr(A,B)=1$, and so $x_{\{A,B\}}=0$, but	the `only if' 
part of the theorem requires $x_{\{A,B\}}>0$. To get around such obstacles, we deal with 
the quotient of the ternary relation made from conjunctive forks.

\subsection{Other preliminaries}		
 Events $E$ and $F$ are said to be \Pr-equal, in symbols $E\peq F$, if and only
 if $\Pr(E{\vartriangle}F)=0$. We claim that
\begin{align}
& (A,B,A)_{\Pr} \text{ if and only if $A$ is \Pr-nontrivial and $A\peq B\,$,}\label{iji} \\ 
& \text{if $(A,B,C)_{\Pr}$ and $(A,C,B)_{\Pr}$, then $B\peq C$.}\label{fbtw}
\end{align} 
To justify claim~\eqref{iji}, note that $(A,B,A)_{\Pr}$ means the conjunction of $\cvr{A}{B}>0$  and at least one of 
$\Pr(A)=0$, $\Pr(A)=1$, $A\peq \oB$, $A\peq B$; of the four equalities, only the last one is compatible with $\cvr{A}{B}>0$.
To justify claim~\eqref{fbtw}, note that $\cvr{B}{B}-\cvr{B}{C}=\Pr (B)\Pr (\oB C) +\Pr (\oB)\Pr (B\oC)$; when $B$ is 
$\Pr$-nontrivial, the right-hand side vanishes if and only if $\Pr(B{\vartriangle}C)=0$; it follows that
\begin{align*}
& \text{if $(A,B,C)_{\Pr}$, then $\cvr{B}{B}\geq\cvr{B}{C}$}\\
& \text{with equality if and only if $B\peq C\,$;}
\end{align*}
by \eqref{fero}, this implies that  
\begin{align*}
& \text{if $(A,B,C)_{\Pr}$, then $\cvr{A}{B}\geq\cvr{A}{C}$}\\
& \text{with equality if and only if $B\peq C\,$,}
\end{align*}
which in turn implies~\eqref{fbtw}.

\subsection{The proof}

\begin{lem}\label{V1}
A ternary relation is fork representable only
    if it is a regular forkness.
		\end{lem}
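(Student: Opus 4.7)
The plan is to fix events $A_i$ with $\rrr = \{(i,j,k) \colon (A_i,A_j,A_k)_{\Pr}\}$ and verify the five forkness axioms \eqref{sym}--\eqref{btw} together with regularity \eqref{E:regular}, in each case pulling the hypothesis back to a concrete statement about the $A_i$ via the machinery already assembled in this section.

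Before anything else I would record the symmetry built into Reichenbach's definition: conditions \eqref{reich1}--\eqref{reich4} are all invariant under swapping the roles of $A$ and $C$, so $(A,B,C)_{\Pr}\Leftrightarrow(C,B,A)_{\Pr}$. This immediately gives \eqref{flip}. For \eqref{sym} and \eqref{E:trans} I would use the characterization \eqref{iji}: the hypothesis $(i,j,i)\in\rrr$ rewrites as ``$A_i$ is \Pr-nontrivial and $A_i\peq A_j$'', whence $A_j$ is also \Pr-nontrivial and $A_j\peq A_i$, which is exactly $(j,i,j)\in\rrr$; chaining two such instances yields \eqref{E:trans} by transitivity of $\peq$. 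For \eqref{btw} I would invoke \eqref{fbtw}: from $(i,k,j)\in\rrr$ and $(i,j,k)\in\rrr$ we get $A_j\peq A_k$, and since \eqref{nontriv} delivers the nontriviality of $A_j$ and $A_k$, \eqref{iji} produces $(j,k,j)\in\rrr$.

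For \eqref{lower} the key observation is that $\egy_A\ci\egy_B|\egy_B$ holds trivially (both sides of \eqref{reich1} equal $\Pr(A|B)$, and both sides of \eqref{reich2} equal zero), so triples of shape $(X,Y,Y)$ lie in the conjunctive-fork relation precisely when $Y$ is \Pr-nontrivial and $\cvr{X}{Y}>0$. From $(A_i,A_j,A_k)_{\Pr}$, \eqref{nontriv} supplies nontriviality of all three events; the strict inequalities \eqref{reich3}--\eqref{reich4} (rephrased as positive covariances in Section~3.1) give $\cvr{A_i}{A_j},\cvr{A_j}{A_k}>0$; and \eqref{covac} gives $\cvr{A_i}{A_k}>0$. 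These three pieces produce $(i,j,j)$, $(j,k,k)$, and---using the $A$--$C$ symmetry recorded above---$(k,i,i)$ in $\rrr$.

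Finally, for regularity \eqref{E:regular} I would observe that $i\eee i'$ unpacks (via \eqref{iji}) to $A_i\peq A_{i'}$, and analogously for the other two pairs. Every quantity entering Reichenbach's conditions---the probabilities $\Pr(B)$, $\Pr(AB)$, $\Pr(ABC)$, and the conditional probabilities derived from them---is invariant under replacing events by \Pr-equal events, since $\Pr$ is unaffected by symmetric differences of measure zero. Hence $(A_i,A_j,A_k)_{\Pr}$ transfers verbatim to $(A_{i'},A_{j'},A_{k'})_{\Pr}$. No step is a genuine obstacle here: the proof is a systematic cross-reference of Reichenbach's definition against \eqref{iji}, \eqref{fbtw}, \eqref{covac}, and \eqref{nontriv}, and the main care required is simply to handle the symmetry and the trivial conditional independence $\egy_A\ci\egy_B|\egy_B$ carefully.
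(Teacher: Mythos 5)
Your proof is correct and follows essentially the same route as the paper: \eqref{sym}, \eqref{E:trans} and \eqref{E:regular} via \eqref{iji}, \eqref{flip} via the $A$--$C$ symmetry of Reichenbach's definition, \eqref{lower} via \eqref{covac} together with the positive covariances, and \eqref{btw} via \eqref{fbtw} and \eqref{iji}. Your explicit observation that $\egy_A\ci\egy_B|\egy_B$ holds trivially, so that $(X,Y,Y)_{\Pr}$ reduces to $\cvr{X}{Y}>0$, usefully fills in a detail the paper leaves implicit in its terse ``follows from \eqref{covac}.''
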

\begin{proof}
Consider a fork-representable ternary relation \rrr on a ground set $N$. Proving the lemma means verifying that \rrr 
has properties \eqref{sym}, \eqref{E:trans}, \eqref{flip}, \eqref{lower}, \eqref{btw}, \eqref{E:regular}. 
Since \rrr is fork representable, there are events  $A_i$ in in some probability
 space $(\OO,\mathcal F,\Pr)$, with $i$ ranging over $N$, such that
\[
(i,j,k)\in \rrr \;\Leftrightarrow\;
 (A_i,A_j,A_k)_{\Pr}\, .
\]
Properties \eqref{sym} and  \eqref{E:trans}, 
\begin{align*}
& (i,j,i)\in\rrr\;\;\Rightarrow\;\;(j,i,j)\in\rrr\, ,\\
& (i,j,i), (j,k,j)\in\rrr\;\;\Rightarrow\;\;(i,k,i)\in\rrr\, ,	
\end{align*}
follow from \eqref{iji}. Property~\eqref{flip},
\begin{align*}
& (i,k,j)\in\rrr\;\;\Rightarrow\;\;(j,k,i)\in\rrr\, ,
\end{align*}
is implicit in the definition of $(A_i,A_j,A_k)_{\Pr}$. Property~\eqref{lower},
\begin{align*}
& (i,j,k)\in\rrr\;\;\Rightarrow\;\;(i,j,j), (j,k,k),(k,i,i)\in \rrr\, , 
\end{align*}
follows from~\eqref{covac}. Property \eqref{btw},
\begin{align*}
& (i,j,k)\in\rrr\;\;\Rightarrow\;\;(i,j,j), (j,k,k),(k,i,i)\in \rrr\, , 
\end{align*}
follows from \eqref{fbtw} and \eqref{iji}. Property~\eqref{E:regular},
\[
    (i,j,k)\in\rrr, \; i\eee i', \; j\eee j', \; k\eee k'
            \;\Rightarrow\; (i',j',k')\in \rrr\, ,
\]
follows from~\eqref{iji} alone. 
\end{proof}

\begin{lem}\label{V2}
A regular forkness is fork representable only if its quotient is solvable.
		 \end{lem}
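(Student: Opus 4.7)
Assume that \rrr is a regular forkness, fork-represented by events $A_i$, $i\in N$, in a probability space $(\OO,\mathcal F,\Pr)$. The plan is to exhibit a positive solution of the quotient system~\eqref{system} directly, by setting
\[
x_{\{I,K\}} \;=\; -\ln \corr(A_i,A_k)
\]
for any representatives $i\in I$ and $k\in K$, for every pair $\{I,K\}$ of distinct equivalence classes of \eee that occurs in some triple of \qqq. Identity~\eqref{corident} already makes this recipe plausible; what remains is to verify (i) independence of the choice of representatives, (ii) that each $x_{\{I,K\}}$ is a strictly positive real, and (iii) that the resulting vector solves~\eqref{system}.

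Claims (i) and (iii) are the routine parts. For (i), if $i\eee i'$ then $(i,i',i)\in\rrr$, and~\eqref{iji} gives $A_i\peq A_{i'}$; but \Pr-equal events share all joint probabilities with any third event, so every correlation involving $A_i$ equals the corresponding correlation involving $A_{i'}$. The same argument applies in the second coordinate. Note that each representative $i$ used here lies in $V_\rrr$, so $(A_i,A_i,A_i)_\Pr$ holds and, by~\eqref{iji} with $B=A$, $A_i$ is \Pr-nontrivial, ensuring that $\corr(A_i,\cdot)$ is well defined. For (iii), given $(I,J,K)\in\qqq$ with $I,J,K$ pairwise distinct, choose a witness $(i,j,k)\in\rrr$; by~\eqref{fork} the indicators satisfy $\egy_{A_i}\ci\egy_{A_k}|\egy_{A_j}$, so~\eqref{corident} yields $\corr(A_i,A_k)=\corr(A_i,A_j)\cdot\corr(A_j,A_k)$, and taking $-\ln$ of both sides gives $x_{\{I,K\}}=x_{\{I,J\}}+x_{\{J,K\}}$.

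The delicate step, which I expect to be the main obstacle, is the strict positivity in (ii). That $\corr(A_i,A_k)>0$---hence $x_{\{I,K\}}$ is finite---follows from~\eqref{covac} applied to the witnessing triple. What requires genuine use of the quotient construction is the strict inequality $\corr(A_i,A_k)<1$: by the equality case of Cauchy--Schwarz applied to the indicator functions $\egy_{A_i}$ and $\egy_{A_k}$, $\corr(A_i,A_k)=1$ forces $A_i\peq A_k$, and then~\eqref{iji} gives $i\eee k$, contradicting $I\neq K$. This is precisely the obstruction foreshadowed in the informal discussion preceding the lemma: the passage from \rrr to its quotient \qqq collapses all \Pr-equal events together, rescuing the naive recipe $x=-\ln\corr$ from being forced to take the value zero on pairs of \Pr-equal events.
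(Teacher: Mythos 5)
Your proposal is correct and follows essentially the same route as the paper: the same recipe $x_{\{I,K\}}=-\ln\corr(A_{r(I)},A_{r(K)})$, well-definedness and the system equations via \eqref{iji} and \eqref{corident}, and strict positivity via the equality case of the covariance (Cauchy--Schwarz) inequality together with nontriviality and $\corr>0$ to rule out $A_{r(I)}\peq A_{r(K)}$ and $A_{r(I)}\peq\oA_{r(K)}$. The only cosmetic difference is that for positivity of the correlation on adjacent pairs of a witnessing triple you should cite \eqref{fork} alongside \eqref{covac}, as the paper does.
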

 \begin{proof}
Consider a fork-representable regular forkness \rrr on a ground set $N$. 
Since \rrr is fork representable, there are events  $A_i$ in in some probability
 space $(\OO,\mathcal F,\Pr)$, with $i$ ranging over $N$, such that
\[
(i,j,k)\in \rrr \;\Leftrightarrow\;
 (A_i,A_j,A_k)_{\Pr}\, .
\]
With \qqq standing for the quotient of \rrr, proving the lemma means finding 
positive numbers $x_{\{I,J\}}$ such that 
\begin{multline}\label{solution}
  x_{\{I,K\}}\!=\!x_{\{I,J\}}+x_{\{J,K\}}\\ 
        \text{ for all $(I,J,K)$ in $\qqq$ with pairwise distinct $I,J,K$.}
\end{multline}
We claim that this can be done by first choosing an element $r(I)$ from each equivalence class $I$ of \eee and then setting $$x_{\{I,J\}} = -\ln \corr({A_{r(I)}},{A_{r(J)}})$$
for every pair $I,J$ of distinct equivalence classes that appear together in a triple in \qqq. 
(By \eqref{iji}, the right hand side depends only on $I$ and $J$ rather than the choice of $r(I)$ and $r(J)$.)

To justify this claim, note first that~\eqref{solution} is satisfied by virtue of~\eqref{corident}.
A special case of the {\em covariance inequality\/} guarantees that every pair $A,B$ of events satisfies 
\[
(\cvr{A}{B})^2 \le \cvr{A}{A}\cdot \cvr{B}{B}
\]
and that the two sides are equal if and only if 
    $A\peq B$ or $A\peq \oB$ or $\Pr(A)=0$ or $\Pr(A)=1$ or $\Pr(B)=0$ or $\Pr(B)=1$. 
		If equivalence classes $I,J$ of \eee are distinct, then $A_{r(I)}\not\peq A_{r(J)}$; if, in addition, $I$ and $J$ appear together 
		in a triple in \qqq, then $A_{r(I)}\not\peq \oA_{r(J)}$ (since $\corr({A_{r(I)}},{A_{r(J)}})>0$ by~\eqref{fork} and~\eqref{covac}) and 		
		$0<\Pr(A_{r(I)})<1$, $0<\Pr(A_{r(J)})<1$ (by~\eqref{nontriv}). In this case, $$\cvr{A_{r(I)}}{A_{r(J)}}^2<\cvr{A_{r(I)}}{A_{r(I)}}\cdot\cvr{A_{r(J)}}{A_{r(J)}},$$
		and so $x_{\{I,J\}}>0$.
\end{proof}

%444444444444444444444444444444444444444444444444444444444444444444444444444444
\section{Proof of the `if' part}\label{S:mainproof}

\begin{lem}\label{V3}
If the quotient of a regular forkness on a finite ground set is solvable, then it is fork representable.
		 \end{lem}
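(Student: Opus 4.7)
I want to produce events $\{A_i\}_{i\in N}$ in some probability space such that $(A_i,A_j,A_k)_{\Pr}\Leftrightarrow(i,j,k)\in\rrr$. Using the regularity of $\rrr$, the problem first reduces to manufacturing one event per equivalence class of $\eee$: if I find a $\Pr$-nontrivial event $B_I$ for each class $I$ with $(B_I,B_J,B_K)_{\Pr}\Leftrightarrow(I,J,K)\in\qqq$, then setting $A_i=B_I$ for $i\in I$ and taking $A_i$ to be a $\Pr$-trivial event for $i\in N\sm V_\rrr$ will do the job: triples with an index outside $V_\rrr$ lie outside $\rrr$ by \eqref{lower} and outside the fork relation of the $A_i$'s by \eqref{nontriv}.

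Using solvability, I pick a positive solution $(x_{\{I,J\}})$ of \eqref{system} and set $\rho_{IJ}:=e^{-x_{\{I,J\}}}\in(0,1)$ on each pair $\{I,J\}$ appearing in $\qqq$. The additivity of the $x$-values along triples of $\qqq$ with pairwise distinct entries becomes the multiplicative identity $\rho_{IK}=\rho_{IJ}\rho_{JK}$, which by~\eqref{corident} is the algebraic signature of $\egy_{B_I}\ci\egy_{B_K}|\egy_{B_J}$ for centred binary variables with the targeted correlations. I would then build the $B_I$'s as binary random variables with $\Pr(B_I)=1/2$ and pairwise correlations matching the $\rho_{IJ}$'s, realising all conditional independences prescribed by the distinct triples of $\qqq$. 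A natural tool is a Markov-style construction using binary-symmetric channels: assign a Rademacher sign noise $\epsilon_e$, with mean calibrated to the required correlation, to each edge of a carefully chosen ``spanning'' structure on the equivalence classes, and form each $B_I$ as a product of such signs. This produces the correlations and the prescribed conditional independences.

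The main obstacle is the converse direction: ensuring that no spurious fork arises, i.e.\ that $(B_I,B_J,B_K)_{\Pr}$ fails whenever $(I,J,K)\notin\qqq$ with $I,J,K$ distinct. By \eqref{fork} combined with \eqref{fero}, it is enough to arrange that for every such forbidden triple either $\rho_{IK}\neq\rho_{IJ}\rho_{JK}$ (blocking conditional independence) or one of $\rho_{IJ},\rho_{JK}$ is non-positive (blocking a required positive covariance). Naive Markov-tree constructions tend to leak in extra conditional independences---for instance among siblings sharing a common parent---so the basic construction must be enriched, for example by tensoring with additional independent noise components chosen to perturb correlations on non-$\qqq$ pairs while leaving all $\qqq$-prescribed correlations intact. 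Feasibility of such a perturbation rests on the forkness axioms on $\rrr$---especially the ``unique betweenness'' axiom \eqref{btw}, which forbids a triple from admitting two distinct middle members---and on the finiteness of $N$, which lets us choose the perturbations small enough not to disturb the identities imposed by $\qqq$.
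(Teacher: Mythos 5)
Your high-level idea matches the paper's: convert the additive solution $x$ into multiplicative quantities ($\rho_{IJ}=e^{-x_{\{I,J\}}}$ versus the paper's $\gamma^{x\{I,J\}}$), realize these as pairwise correlations of indicator variables with $\Pr(A_I)=\tfrac12$, and invoke the identity \eqref{corident} so that additivity along triples of \qqq yields the required conditional independences. You also correctly isolate the real difficulty, namely spurious forks. But the proposal has a genuine gap precisely there. The hard case is a triple $\{I,J,K\}$ of distinct classes all three of whose pairs appear in triples of \qqq while no ordering of $\{I,J,K\}$ itself lies in \qqq (the paper's family $M_\qqq$). For such a triple \emph{every} pairwise correlation is ``\qqq-prescribed'' in your sense, so your remedy---perturbing correlations on non-\qqq pairs---has nothing left to perturb; and nothing in the solvability hypothesis prevents $x_{\{I,K\}}=x_{\{I,J\}}+x_{\{J,K\}}$ from holding accidentally for such a triple, in which case a purely pairwise construction produces a conjunctive fork that must not exist. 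The fix has to act on the \emph{three-way interaction}: the paper adds a term $\vare\,\chi_{\{I,J,K\}}$ to the Fourier expansion of the measure for each member of $M_\qqq$, which shifts $\Pr(A_I A_J A_K)$ without touching any pairwise marginal and thereby destroys conditional independence for exactly these triples. Your axiom \eqref{btw} does not substitute for this; its role is only to make the middle class of a \qqq-triple unique, not to rule out accidental additivity on $M_\qqq$ triples.

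Two further points are asserted rather than proved. First, \eqref{corident} gives only the implication from conditional independence to multiplicativity of correlations; the converse, which you need, requires knowing that the three-way interaction vanishes (conditional independence of binary variables given a binary variable is \emph{two} equations, of which the correlation identity captures only one---see the pair \eqref{E:vind}, \eqref{E:vind2} in the paper). Second, the existence of a joint distribution with the prescribed correlations on $E_\qqq$, zero covariance off $E_\qqq$, and no unwanted structure is not established by appealing to a ``spanning'' Markov construction; $E_\qqq$ need not be a tree, and sibling pairs in a tree acquire exactly the unwanted conditional independences you mention. The paper sidesteps all of this by writing the measure explicitly as $2^{-n}\bigl[1+\sum_{E_\qqq}\chi_{\{I,J\}}\gamma^{x\{I,J\}}+\vare\sum_{M_\qqq}\chi_{\{I,J,K\}}\bigr]$ and choosing $\gamma,\vare$ small enough to keep it positive; positivity is another issue your $\rho_{IJ}=e^{-x_{\{I,J\}}}$ (which may be arbitrarily close to $1$) leaves unaddressed, and is the reason for the free parameter $\gamma$.
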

 
\begin{proof}
Given the quotient \qqq of a regular forkness on a finite ground set along with 
positive numbers $x_{\{I,J\}}$ such that 
\begin{multline}
  x_{\{I,K\}}\!=\!x_{\{I,J\}}+x_{\{J,K\}}\\ 
        \text{ for all $(I,J,K)$ in $\qqq$ with pairwise distinct $I,J,K$,}\label{solution++}
\end{multline}
we have to construct a probability
 space $(\OO,\mathcal F,\Pr)$ and 
 events  $A_I$ in this space, indexed by elements $I$ of the ground set $C$ of \qqq, such that 
 \begin{equation}\label{qeq}
    \qqq=\{(I,J,K)\in C^3\colon (A_I,A_J,A_K)_{\Pr}\,\}\,.
 \end{equation}

For this purpose, we let $\OO$ be the power set $2^C$ of $C$, we let $\mathcal F$ be the power set $2^{\OO}$ of $\OO$, and we set 
 \[ 
A_I=\{\omega\in\OO: I\in\omega\}.
\]
Now let us construct the probability measure $\Pr$. Set $n=\abs{C}$. 
Given a subset $L$ of $C$, consider the function $\chi_L\colon \OO\rightarrow \{+1, -1\}$ defined by 
  \[
    \chi_L(\omega)= (-1)^{\abs{\,\omega\,\cap\, L\,}}\,.
   \]
Let $E_\qqq$ stand for the family of all two-element subsets
 $\{I,J\}$ of $C$ such that $I$ and $J$ appear together in a triple in \qqq.
Let $M_\qqq$ stand for the family of all three-element subsets
 $\{I,J,K\}$ of $C$ such that $\{I,J\}$, $\{J,K\}$, and $\{K,I\}$ belong to $E_\qqq$ 
and no triple in $\qqq$ is formed by all three $I,J,K$.
 Finally, for positive numbers $\gamma$ and $\vare$ that are sufficiently small in a sense to be specified shortly, 
define $\Pr\colon \OO\rightarrow {\bf R}$ by 
 \[
    \Pr(\omega)=2^{-n}\Big[1+
    \textstyle{\sum_{\{I,J\}\in E_\qqq}\;\chi_{\{I,J\}}(\omega)\,\gamma^{x\{I,J\}}
    +\vare\sum_{\{I,J,K\}\in M_\qqq}}\;\chi_{\{I,J,K\}}(\omega)\Big].
 \] 
(In exponents, we write $x\{I,J\}$ in place of $x_{\{I,J\}}$.)
     Here, readers with background in harmonic analysis will have recognized
    the Fourier-Stieltjes transform on the group $\Z_2$ and the characters
    $\chi_L$.
		
 When $L$ is nonempty, $\chi_L$ takes each of the values $\pm1$ on the same
 number its arguments $\omega$, and so  $\sum_{\omega\in{\mathit\Omega}}\chi_L(\omega)=0$, which implies that 
 $\sum_{\omega\in{\mathit\Omega}}P(\omega)=1$. If $\gamma<n^{-2/x\{I,J\}}$
 for all $\{I,J\}\in E_\qqq$ and $\vare<n^{-3}$, then
 \[
    2^{n}\Pr(\omega)\geq
        1-\abs{E_\qqq}\,n^{-2}-\abs{M_\qqq}\,n^{-3}>0\,,
 \]
so that \Pr is a probability measure, positive on the elementary events.

  The bulk of the proof consists of verifying that this construction satisfies~\eqref{qeq}. 
	We may assume that $C\ne\emptyset$ (else $\OO$ is a singleton, and so \eqref{qeq} holds).
	Given a subset $S$ of $C$, write
	\[
	A_S \;=\; \{\omega\in\OO: S\subseteq\omega\}.
	\]
	Since
 \[
    \textstyle{\sum_{\omega\in A_S}\chi_L(\omega)}=
        \begin{cases}
            \;(-1)^{|L|}\,2^{n-\abs{S}}& \quad\text{if $L\subseteq S$},\\
            \;0                        & \quad\text{otherwise},
        \end{cases}
 \]
 we have
\begin{multline}\label{master}
    2^{\abs{S}}\Pr(A_S)= \\
		1+ \textstyle{\sum_{\{I,J\}\in E_\qqq,\:I,J\in S}\,\gamma^{x\{I,J\}}}
            -\vare\,\big|\{\{I,J,K\}\in M_\qqq\colon I,J,K\in S\}\big|\,.
 \end{multline}
In particular, formula \eqref{master} yields for all $I$ in $C$
\[
\Pr(A_I)=\frac12 
\]  
and it yields for all choices of distinct $I,J$ in $C$ 
\begin{equation}\label{f2}
    \Pr(A_{\{I,J\}})= \frac14+
                \begin{cases}
                    \;\frac14\gamma^{\,x\{I,J\}} & \quad\text{if $\{I,J\}\in E_\qqq$},\\
                    \;0\,  &                     \quad\text{otherwise.}
                \end{cases}
\end{equation}
It follows that
\begin{equation}\label{cvrgamma}
    \cvr{A_I}{A_J}=
        \begin{cases}
                \;\frac14\;\gamma^{\,x(\{I,J\})} & \text{if } \{I,J\}\in E_\qqq,\\
                \;0     & \text{otherwise.}
        \end{cases}
\end{equation}
For future reference, note also that, by definition,
\begin{align}\label{qfork}
&\text{\qqq is a forkness such that $(I,I,I)\in\qqq$ for all $I$ in $C$}\\
&\text{and such that $(I,J,I)\not\in\qqq$ whenever $I\ne J$.}\nonumber
\end{align}
and that 
\begin{align}\label{ijj}
& \{I,J\}\in E_\qqq \;\Leftrightarrow\; (I,J,J) \in\qqq.
\end{align}
(here, implication $\Rightarrow$ follows from properties~\eqref{flip} and \eqref{lower} of forkness 
and implication $\Leftarrow$ follows straight from the definition of $E_\qqq$).

Now we are ready to verify~\eqref{qeq}. Given a triple $(I,J,K)$ in $C^3$, we have to show that
\begin{equation}\label{verq}
  		(A_I,A_J,A_K)_{\Pr} \Leftrightarrow (I,J,K)\in\qqq\, . 
\end{equation}

{\sc Case 1:} $I=J=K$.\\
Since $C\ne\emptyset$, all $I$ in $C$ are $\Pr$-nontrivial, and so 
we have $(A_I,A_I,A_I)_{\Pr}$. By~\eqref{qfork}, we have $(I,I,I)\in\qqq$. 
 
{\sc Case 2:} $I\ne J$, $K=I$.\\
 Here, $A_I\not\peq A_J$, and so~\eqref{iji} implies that
 $(A_I,A_J,A_I)$ is not a conjunctive fork. By \eqref{qfork}, we have $(I,J,I)\not\in\qqq$.

{\sc Case 3:} $I\ne J$, $K=J$.\\
If $\{I,J\}\in E_\qqq$, then~\eqref{cvrgamma} guarantees $\cvr{A_I}{A_J}>0$, which implies $(A_I,A_J,A_J)_{\Pr}$. By~\eqref{ijj}, we have $(I,J,J)\in\qqq$.
 
 If $\{I,J\}\not\in E_\qqq$, then~~\eqref{cvrgamma} guarantees $\cvr{A_I}{A_J}=0$, and so $(A_I,A_J,A_J)$ is not a conjunctive fork. By~\eqref{ijj}, we have $(I,J,J)\not\in\qqq$.
 
{\sc Case 4:} $I= J$, $K\ne J$.\\
This case is reduced to {\sc Case 3} by the flip $I\leftrightarrow K$, which preserves both sides of~\eqref{verq}.

{\sc Case 5:} $I,J,K$ {\em are pairwise distinct and at least one of
 $\{I,J\}$, $\{J,K\}$, $\{K,I\}$ does not belong to $E_\qqq$.\/}\\ 
By~\eqref{cvrgamma}, at least one of the covariances $\cvr{A_I}{A_J}$, $\cvr{A_J}{A_K}$, $\cvr{A_K}{A_I}$ vanishes, 
and so~\eqref{fork}, \eqref{covac} guarantee that $(A_I,A_J,A_K)$ is not a conjunctive fork. By definition of 
$E_\qqq$, we have $(I,J,K)\not\in\qqq$. 

{\sc Case 6:} $I,J,K$ {\em are pairwise distinct and all of $\{I,J\}$, $\{J,K\}$, $\{K,I\}$ belong to $E_\qqq$.\/}\\ 
By~\eqref{cvrgamma}, all of $\cvr{A_I}{A_J}$, $\cvr{A_J}{A_K}$, $\cvr{A_K}{A_I}$ are positive.
Now~\eqref{fork} implies that $(A_I,A_J,A_K)_{\Pr}$ is equivalent to
 $\egy_{A_I}\ci\egy_{A_K}|\egy_{A_J}$, which means the conjunction of
 \begin{align*}
   \tfrac12\Pr(A_{\{I,J,K\}})&=\Pr(A_{\{I,J\}})\Pr(A_{\{J,K\}})\,,\\
   \tfrac12\big[\Pr(A_{\{I,K\}})-\Pr(A_{\{I,J,K\}})\big]&=
    \big[\tfrac12-\Pr(A_{\{I,J\}})\big]\cdot
		\big[\tfrac12-\Pr(A_{\{J,K\}})\big]\,.
 \end{align*}
Substitution from \eqref{f2} converts these two equalities to 
 \begin{align}
    \label{E:vind}
    8\Pr(A_{\{I,J,K\}})=&
        1\!+\!\gamma^{\,x\{I,J\}}\!+\!\gamma^{\,x\{J,K\}}
                \!+\!\gamma^{\,x\{I,J\}+x\{J,K\}}\\
    8\Pr(A_{\{I,J,K\}})=&
        1\!+\!\gamma^{\,x\{I,J\}}\!+\!\gamma^{\,x\{J,K\}}
                \!-\!\gamma^{\,x\{I,J\}+x\{J,K\}}\!+\!2\gamma^{\,x\{I,K\}}.
   \label{E:vind2}
 \end{align}
Conjunction of~\eqref{E:vind} and ~\eqref{E:vind2} is equivalent to the conjunction of~\eqref{E:vind} and 
\begin{equation}\label{E:vind3}
x_{\{I,J\}}+x_{\{J,K\}}=x_{\{I,K\}}\, . 
\end{equation}
To summarize, $(A_I,A_J,A_K)_{\Pr}$ is equivalent to the conjunction of~\eqref{E:vind} and ~\eqref{E:vind3}.

{\sc Subcase 6.1:} $\{I,J,K\}\in M_\qqq$.\\  
In this subcase, formula \eqref{master} yields
\[
    8\,\Pr(A_{\{I,J,K\}})=1+ \gamma^{\,x\{I,J\}}+ \gamma^{\,x\{J,K\}}+ \gamma^{\,x\{I,K\}}-\vare\, ,
 \]
which reduces \eqref{E:vind} to
 $\gamma^{\,x\{I,K\}}-\vare =\gamma^{\,x\{I,J\}+x\{J,K\}}$.
 This is is inconsistent with~\eqref{E:vind3}, and so $(A_I,A_J,A_K)$ is not a conjunctive fork.
 By definition of $M_\qqq$, we have $(I,J,K)\not\in~\qqq$.

{\sc Subcase 6.2:} $\{I,J,K\}\not\in M_\qqq$.\\  
In this subcase, formula \eqref{master} yields
\[
    8\,\Pr(A_{\{I,J,K\}})=1+ \gamma^{\,x\{I,J\}}+ \gamma^{\,x\{J,K\}}+ \gamma^{\,x\{I,K\}} ,
 \]
which reduces \eqref{E:vind} to~\eqref{E:vind3}, and so  
$(A_I,A_J,A_K)_{\Pr}$ is equivalent to~\eqref{E:vind3} alone.
Now completing the proof means verifying that 
\[
  		x_{\{I,J\}}+x_{\{J,K\}}=x_{\{I,K\}} \;\;\Leftrightarrow\;\; (I,J,K)\in\qqq . 
\]
Implication $\Leftarrow$ is ~\eqref{solution++}.
To prove the reverse implication, note first that by~\eqref{solution++} along with $x_{\{I,J\}}>0$ and $x_{\{J,K\}}>0$, we have 
\begin{align}
x_{\{I,J\}}+x_{\{J,K\}}=x_{\{I,K\}}  \;\;\Rightarrow\;\; x_{\{J,K\}}\!<\!x_{\{I,K\}} \;\;\Rightarrow\;\; (J,I,K)\not\in \qqq\, ,\label{jik}\\
x_{\{I,J\}}+x_{\{J,K\}}=x_{\{I,K\}}  \;\;\Rightarrow\;\; x_{\{I,J\}}\!<\!x_{\{I,K\}} \;\;\Rightarrow\;\; (I,K,J)\not\in \qqq\, .\label{ikj}
\end{align}
By assumptions of this case and subcase, some triple in $\qqq$ is formed by all three $I,J,K$ 
 and so, since \qqq is a forkness, ~\eqref{flip}  with \qqq in place of \rrr guarantees that at least one of $(J,I,K)$, $(I,K,J)$, $(I,J,K)$ 
belongs to~\qqq. If $x_{\{I,J\}}+x_{\{J,K\}}=x_{\{I,K\}}$, then \eqref{jik} and \eqref{ikj} exclude the first two options, and so we have 
$(I,J,K)\in\qqq$.
\end{proof}

\begin{lem}\label{V4}
If a regular forkness has a fork-representable quotient, then it is fork representable.
		 \end{lem}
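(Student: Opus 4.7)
The plan is to lift the given fork-representation of $\qqq$ up to one of $\rrr$ by letting the members of each $\eee$-class share a single event and handing a $\Pr$-trivial event to any index outside $V_\rrr$. In detail, let $C$ be the ground set of $\qqq$ (the $\eee$-classes on $V_\rrr$), and let $B_I$, $I\in C$, be events in some probability space $(\OO,\mathcal F,\Pr)$ that fork-represent $\qqq$. For each $i\in N$ set $A_i=B_{[i]}$ if $i\in V_\rrr$ (with $[i]$ the $\eee$-class of $i$) and $A_i=\pmn$ otherwise; the claim is that these $A_i$ fork-represent $\rrr$.

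The verification rests on one combinatorial observation: every element of $N$ appearing in some triple of $\rrr$ lies in $V_\rrr$. Indeed, if $(i,j,k)\in\rrr$ then~\eqref{lower} yields $(k,i,i),(i,j,j),(j,k,k)\in\rrr$; each of these has the form $(x,y,y)$, so~\eqref{flip} converts it to $(y,y,x)\in\rrr$, whence a second application of~\eqref{lower} forces $(y,y,y)\in\rrr$. Hence $i,j,k\in V_\rrr$. It follows that if any of $i,j,k$ lies outside $V_\rrr$, then both sides of the desired equivalence fail: $(i,j,k)\notin\rrr$ by the above, and $(A_i,A_j,A_k)_\Pr$ fails because some $A_\cdot$ is $\Pr$-trivial and~\eqref{nontriv} rules this out. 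If instead $i,j,k\in V_\rrr$, writing $I=[i],J=[j],K=[k]$, regularity~\eqref{E:regular} together with the ``for all'' formulation of the quotient gives $(i,j,k)\in\rrr\Leftrightarrow(I,J,K)\in\qqq$; since $B_I,B_J,B_K$ fork-represent $\qqq$, the right-hand side is equivalent to $(B_I,B_J,B_K)_\Pr$, which is $(A_i,A_j,A_k)_\Pr$ by construction.

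I expect the only genuinely nontrivial step to be the closure observation that indices outside $V_\rrr$ never appear in triples of a forkness; everything else is bookkeeping. One subtlety worth noting is that $I,J,K$ may coincide when two or three of $i,j,k$ share an $\eee$-class, making some of the $A_i$ coincide too; but the ``for all'' formulation of the quotient, secured by regularity, absorbs this uniformly and avoids any case analysis reminiscent of Cases 2--4 or Subcase 6.1 of Lemma~\ref{V3}.
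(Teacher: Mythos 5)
Your proposal is correct and follows essentially the same route as the paper: collapse each $\eee$-class to a single event taken from a fork-representation of the quotient and hand a $\Pr$-trivial event to every index outside $V_\rrr$ (the paper realizes this by transporting the measure to $2^N$ so that the $A_i$ within a class become $\Pr$-equal rather than literally equal, which changes nothing). Your write-up in fact supplies the one nontrivial detail the paper dismisses as ``a routine matter,'' namely the closure observation, via \eqref{lower} and \eqref{flip}, that every index occurring in a triple of a forkness must lie in $V_\rrr$.
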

\begin{proof} 
Given a regular forkness \rrr, a probability space $(\OO^0,\mathcal F^0,\Pr^0)$, and 
 events  $A_I^0$ in this space, indexed by elements $I$ of the ground set $C$ of the quotient \qqq of \rrr, such that 
\[
    \qqq=\{(I,J,K)\in C^3\colon (A_I^0,A_J^0,A_K^0)_{\Pr^0}\,\},
 \]		
we have to construct a probability
 space $(\OO,\mathcal F,\Pr)$ and 
 events  $A_i$ in this space, indexed by elements $i$ of the ground set $N$ of \rrr, such that 
 \begin{equation}\label{req}
    \rrr=\{(i,j,k)\in N^3\colon (A_i,A_j,A_k)_{\Pr}\,\}\,.
 \end{equation}		
For this purpose, we let $\OO$ be the power set $2^N$ of $N$, we let $\mathcal F$ be the power set $2^\OO$ of $\OO$,
and we set 
\[ 
A_i=\{\omega\in\OO: i\in\omega\}.
\]
For each element $\omega$ of $\OO$ such that every equivalence class $I$ of \eee satisfies $I\subseteq\omega$ or $I\cap\omega=\emptyset$, define $\Pr(\omega)=\Pr^0(\omega^0)$, where $\omega^0$ is the set of equivalence classes of \eee contained in $\omega$. For all other elements $\omega$ of $\OO$, define $\Pr(\omega)=0$.  Now verifying~\eqref{req} is a routine matter.
\end{proof}

%555555555555555555555555555555555555555555555555555555555555555555555555555555
\section{Causal betweenness}\label{S:btw}

 Reichenbach \cite[p.~190]{Rei56} defined an event $B$ to be
 \emph{causally between} events $A$ and $C$ if
 \begin{align*}
    1 > \Pr(A|B) > \Pr(A|C)> \Pr(A) > 0\,,\\
    1 > \Pr(C|B) > \Pr(C|A)> \Pr(C) > 0\,,\\
    \Pr(C|AB) = \Pr(C|B)\,.\qquad\qquad
 \end{align*}
 Implicit in this definition is the assumption $\Pr(B)>0$
 that makes $\Pr(A|B)$ and $\Pr(C|B)$ meaningful. In turn,
 $\Pr(A|B)>0$ means $\Pr(AB)>0$, which makes $\Pr(C|AB)$ meaningful. If $B$
 is causally between $A$ and $C$, then all three events are
 $\Pr$-nontrivial and no two of them $\Pr$-equal.

 If $(A,B,C)$ is a conjunctive fork, then (contrary to the claim in~\cite[p.~179]{Bre77}) $B$ need not be causally
 between $A$ and $C$ even if no two of $A,B,C$ are $\Pr$-equal: for example, if
 \[
    \begin{array}{llll}
        \Pr(A BC)=1/5,   &\Pr(A B\oC)=1/5,\\ 
				\Pr(\oA BC)=1/5, &\Pr(\oA B\oC)=1/5,\\ 
				\Pr(A\oB C)=0,        &\Pr(A\oB\oC)=0,\\
        \Pr(\oA\oB C)=0, &\Pr(\oA\oB\oC)=1/5,
    \end{array}
 \]
then $(A,B,C)$ is a conjunctive fork and $\Pr(A|B) = \Pr(A|C)$. 

If an event $B$ is causally between $A$ and $C$, then $(A,B,C)$ need not
 be a conjunctive fork: for example, if
 \[
    \begin{array}{llll}
        \Pr(A BC)=1/20,   &\Pr(A B\oC)=2/20,\\ 
				\Pr(\oA BC)=2/20, &\Pr(\oA B\oC)=4/20,\\ 
				\Pr(A\oB C)=0,        &\Pr(A\oB\oC)=1/20,\\
        \Pr(\oA\oB C)=1/20, &\Pr(\oA\oB\oC)=9/20,
    \end{array}
 \]
then $B$ is causally between $A$ and $C$ and $\Pr(AC|\oB) \ne \Pr(A|\oB) \Pr(C|\oB)$.

Following~~\cite{ChvWu12}, we call a ternary relation $\bbb$ on a finite ground set $N$
an {\em abstract causal betweenness\/} if, and only if, 
 there are events $A_i$ with $i$ ranging over $N$ such that 
 \begin{equation*}\label{cbtw}
     \bbb=\{(i,j,k)\in N^3\colon\;\text{$A_j$ is causally between $A_i$ and $A_k$}\}\,.
 \end{equation*}
A natural question is which ternary~relations $\bbb$ form an abstract causal betweenness.  
 This question was answered in ~\cite[Theorem~1]{ChvWu12} in terms of
 the directed graph~$G(\bbb)$ whose vertices are all two-element
 subsets of $N$ and whose edges are all ordered pairs
 $(\{i,j\},\{i,k\})$ such that $(i,j,k)\in\bbb$ with $i,j,k$ pairwise
 distinct: 
\begin{align}
&\text{a ternary relation \bbb on a finite ground set} \label{ChvWu}\\
&\text{is an abstract causal betweenness if and only if}\nonumber \\ 
&\quad \bullet\; (i,j,k)\in\bbb\;\Rightarrow\; i,j,k \text{ are pairwise distinct,}\nonumber\\
&\quad \bullet\; (i,j,k)\in\bbb\;\Rightarrow\; (k,j,i)\in\bbb,\nonumber \\
&\quad \bullet\; \text{$G(\bbb)$ contains no directed~cycle.}\nonumber
\end{align}
(The third requirement implies that $(i,j,k)\in\bbb\;\Rightarrow\; (i,k,j)\not\in\bbb$: else $G(\bbb)$ would contain the directed cycle
$\{i,j\}\rightarrow\{i,k\}\rightarrow\{i,j\}$.)

An essential difference between abstract causal betweenness and fork-representable relations is that, on the one hand, every triple in an abstract causal betweenness consists of pairwise distinct elements and, on the other hand, a forkness includes with every triple $(i,j,k)$ most of triples formed by at most two of $i,j,k$. This difference notwithstanding, the two can be compared. The trick is to introduce, for every ternary relation \rrr, the ternary relation $\rrr^\sharp$ consisting of all triples  in \rrr that have pairwise distinct elements.

We claim that 
\begin{align}\label{comp}
&\text{if \rrr is a fork-representable relation on a finite ground set}\\
&\text{such that $(i,j,i)\not\in\rrr$ whenever $i\ne j$,}\nonumber\\
&\text{then $\rrr^\sharp$ is an abstract causal betweenness.}\nonumber
\end{align}
To justify this claim, consider a fork-representable relation \rrr on a finite set $N$ such that $(i,j,i)\not\in\rrr$ whenever $i\ne j$. By Lemma~\ref{V1}, \rrr is a forkness; assumption 
$i\ne j\Rightarrow (i,j,i)\not\in\rrr$ implies that \eee is the identity relation, and so the quotient of \rrr is isomorphic to \rrr. Now Lemma~\ref{V2} guarantees that \rrr is solvable: there are  positive numbers $x_{\{i,j\}}$ such that 
$x_{\{i,k\}}=x_{\{i,j\}}+x_{\{j,k\}}$ for all $(i,j,k)$ in $\rrr$ with pairwise distinct $i,j,k$. Since 
$x_{\{i,k\}}>x_{\{i,j\}}$ for every edge $(\{i,j\},\{i,k\})$ of~$G(\rrr^\sharp)$, this directed graph is acyclic, and so~\eqref{ChvWu} guarantees that $\rrr^\sharp$ is an abstract causal betweenness.

Assumption $i\ne j\Rightarrow (i,j,i)\not\in\rrr$ cannot be dropped from~\eqref{comp}: consider $\rrr = N^3$. This \rrr is fork representable (for instance, by $\OO=\{x,y\}$, $\Pr(x)=\Pr(y)=1/2$, and $A_i=\{x\}$ for all $i$ in $N$). Nevertheless, if $\abs{N}\ge 3$, then $G(\rrr^\sharp)$ contains cycles, and so $\rrr^\sharp$ is not an abstract causal betweenness.

The converse of~\eqref{comp}, 
\begin{align*}
&\text{if $\rrr^\sharp$ is an abstract causal betweenness}\phantom{xxxxxxxxxxxxxxxxxx}\\
&\text{then \rrr is a fork-representable relation}\\
&\text{such that $(i,j,i)\not\in\rrr$ whenever $i\ne j$,}
\end{align*}
is false. Even its weaker version,
\begin{align*}
&\text{if \rrr is a regular forkness}\phantom{xxxxxxxxxxxxxxxxxxxxxxxxxxxxi}\\
&\text{such that $\rrr^\sharp$ is an abstract causal betweenness,}\\
&\text{then \rrr is a fork-representable relation,}
\end{align*}
is false: consider the smallest forkness \rrr on $\{1,2,3,4\}$
that contains the relation
\begin{align*}
\{&(1,3,2), (2,3,4), (3,1,4), (1,4,2), \label{example}\\
&(2,3,1), (4,3,2), (4,1,3), (2,4,1)\}. \nonumber
\end{align*}
Minimality of \rrr implies that 
$(i,j,i)\not\in\rrr$ whenever $i\ne j$; it follows that \eee is the identity relation, and so \rrr 
is a regular forkness. Graph $G(\rrr^\sharp)$ is acyclic
  \begin{center}\setlength{\unitlength}{1.3mm}\setlength{\fboxsep}{0.5mm}
    \begin{picture}(40,29)(0,7)
     \put(10,30){\vector(0,-1){6}}\put(10,30){\line(0,-1){10}}
     \put(20,30){\vector(-1,-1){7}} \put(20,30){\line(-1,-1){10}}
     \put(10,30){\vector(1,-1){14}}\put(10,30){\line(1,-1){20}}
     \put(30,30){\vector(-1,-1){14}} \put(30,30){\line(-1,-1){20}}
     \put(10,20){\vector(0,-1){6}}\put(10,20){\line(0,-1){10}}
     \put(20,30){\vector(1,-2){6}} \put(20,30){\line(1,-2){10}}
     \put(10,10){\vector(1,0){11}}\put(10,10){\line(1,-0){20}}
     \put(30,30){\vector(0,-1){12}} \put(30,30){\line(0,-1){20}}
     \put(10,20){\makebox(0,0){\fcolorbox{black}{white}{\tiny$\{3,4\}$}}}
     \put(10,10){\makebox(0,0){\fcolorbox{black}{white}{\tiny$\{2,4\}$}}}
     \put(30,10){\makebox(0,0){\fcolorbox{black}{white}{\tiny$\{1,2\}$}}}
     \put(10,30){\makebox(0,0){\fcolorbox{black}{white}{\tiny$\{1,4\}$}}}
     \put(20,30){\makebox(0,0){\fcolorbox{black}{white}{\tiny$\{1,3\}$}}}
     \put(30,30){\makebox(0,0){\fcolorbox{black}{white}{\tiny$\{2,3\}$}}}
   \end{picture}
 \end{center}
and so \eqref{ChvWu} guarantees that $\rrr^\sharp$ is an abstract causal betweenness.
By Lemma~\ref{V2}, \rrr is not fork representable: here, system~\eqref{solution} is isomorphic to
 \begin{eqnarray*}
    x_{\{1,2\}}=x_{\{1,3\}}+x_{\{2,3\}} \\
    x_{\{2,4\}}=x_{\{2,3\}}+x_{\{3,4\}} \\
    x_{\{3,4\}}=x_{\{1,3\}}+x_{\{1,4\}} \\
    x_{\{1,2\}}=x_{\{1,4\}}+x_{\{2,4\}}
 \end{eqnarray*}
and this system has no solution with $x_{\{1,4\}}>0$ as the linear combination of its four equations
with multipliers $-1$, $+1$, $+1$, $+1$ reads $0=2x_{\{1,4\}}$.

\section{Concluding remarks}\label{S:disc} 

1.  The patterns studied in this work are based on combinations of conditional
 independence and covariance constraints for events. In recent  decades,
 patterns of conditional independence among random variables have been studied
 in statistics and in probability theory since they provide insight to decompositions
 of multidimensional distributions, so sought for in applications. A framework
 for this activity was developed in the graphical
 models community\cite{Lauri}.

 A general formulation of the problem considers random variables
 $\xi_i$ indexed by $i$ in $N$ and patterns consisting of the conditional
 independences $\xi_i\ci\xi_j|\xi_K$ where $\xi_K=(\xi_k)_{k\in K}$,
 $i,j\in N$, and $i,j\not\in K$. The case $i=j$ means functional
 dependence of $\xi_i$ on $\xi_K$, a.s. The problem is highly nontrivial
 even for four variables~\cite{M.4var.III}.

 First treatments go back to~\cite{Pearl,Spo80}. The variant of the problem
 excluding the functional dependence is most frequent \cite{Studeny}.
 Restrictions to Gaussian~\cite{M.Lnenicka,sul} or binary variables,
 positivity of the distribution of $\xi_N$, etc., have been studied
 as well \cite{Dawid}. The idea to employ the Fourier-Stieltjes transform,
 as in Section~\ref{S:mainproof}, appeared in~\cite{M.indep}, characterizing
 patterns of unconditional independence.

 2. For patterns of conditional independence, the role of forkness is played 
 by graphoids \cite{Pearl}, semigraphoids \cite{semigr},
 imsets \cite{Studeny}, semimatroids \cite{M.4var.III}, etc. Notable are
 connections to matroid representations theory, see \cite{M.matroid}.

3.  All possible patterns of conjunctive forks on events $A_i$ indexed by $i$ in $N$ 
  arise by varying a probability measure on \OO , the power set of $N$. For a ternary relation \rrr on 
	a finite set $N$, the set
 $\mathcal P_\rrr$ of probability measures $\Pr$ on \OO that satisfy $(i,j,k)\in \rrr \;\Leftrightarrow\;
 (A_i,A_j,A_k)_{\Pr}$  is described by finitely many constraints that require quadratic
 polynomials in in\-deter\-minates $z_{\omega}$ indexed by $\omega$ in $\OO$
 to be positive or zero. For fork-representability
 of~$\rrr$, it matters only whether $\mathcal P_\rrr$
 is empty or not, which can be found out in polynomial time by the main result of the present paper.
 The shape of~$\mathcal P_\rrr$, which is a semialgebraic subset of the probability simplex, 
 might be difficult to understand; to reveal it, finer algebraic
 techniques are needed, as in algebraic statistics \cite{DoSS09,Zwi}.

4. One of the two {\em Discrete Mathematics\/} reviewers asked: ``Is there some interesting algebraic/combinatorial structure
in admissible forknesses? Can they be partially ordered for a fixed $N$?" We leave these questions open.

\section*{Acknowledgments}

This work began in March 2011 in the weekly meetings of the seminar
ConCoCO (Concordia Computational Combinatorial Optimization). We wish
to thank its participants, in particular, Luc Devroye and Peter Sloan,
for stimulating and helpful interactions. We also thank the two {\em Discrete Mathematics\/}   
reviewers for their thoughtful comments that 
made us improve the presentation considerably.

\end{document}